\let\emph\relax % there's no \RedeclareTextFontCommand
\DeclareTextFontCommand{\emph}{\color{blue}\bfseries\em}
\crefname{equation}{}{}
\newtheorem{theorem}{Theorem}[section]
\newtheorem{lemma}[theorem]{Lemma}
\newtheorem{corollary}[theorem]{Corollary}
\theoremstyle{definition}
\newtheorem{definition}[theorem]{Definition}
\newtheorem{question}[theorem]{Question}
\newcommand{\cB}{\mathcal{B}}
\newcommand{\cF}{\mathcal{F}}
\newcommand{\cH}{\mathcal{H}}
\newcommand{\cL}{\mathcal{L}}
\newcommand{\cM}{\mathcal{M}}
\newcommand{\brac}[1]{\left[ #1 \right]}
\newcommand{\set}[1]{\left\{ #1 \right\}}
\newcommand{\ceil}[1]{\left\lceil #1 \right\rceil}
\newcommand{\wt}{\widetilde}
\newcommand{\wh}{\widehat}
\newcommand{\ol}{\overline}
\newcommand*\bigcdot{\mathpalette\bigcdot@{.5}}
\newcommand*\bigcdot@[2]{\mathbin{\vcenter{\hbox{\scalebox{#2}{$\m@th#1\bullet$}}}}}
\DeclareMathOperator{\ex}{ex}
\DeclareMathOperator{\EX}{EX}
\DeclareMathOperator{\image}{Im}
\begin{document}
\title{The Tur\'an number for the edge blow-up of trees: the missing case}

\author{
	Cheng Chi\thanks{School of Mathematical Sciences, East China Normal University, 500 Dongchuan Road, Shanghai 200240, China.
		Email: 52215500038@stu.ecnu.edu.cn.}\qquad  Long-Tu Yuan\thanks{School of Mathematical Sciences and Shanghai Key Laboratory of PMMP, East China Normal University, 500 Dongchuan Road, Shanghai 200240, P.R.  China.
Email: ltyuan@math.ecnu.edu.cn. Supported in part by National Natural Science Foundation of China grant 11901554 and Science and Technology Commission
of Shanghai Municipality (No. 18dz2271000).}
}

\date{}
\maketitle
\begin{abstract}
The edge blow-up of a graph is the graph obtained from replacing each edge of it by a clique of the same size where the new vertices of the cliques are all different.
Wang, Hou, Liu and Ma determined the Tur\'{a}n number of the edge blow-up of trees except one particular case.
Answering an problem posed by them, we determined the Tur\'{a}n number of this particular case.
\end{abstract}
\section{Introduction}

Given a family of graphs $\cH$, a graph $G$ is said to be $\cH$-free ($H$-free if $\cH=\set{H}$) if $G$ does not contain any copy of $H\in\cH$ as a subgraph.
A typical problem in extremal combinatorics is the following Tur\'an-type problem: what is the maximum number of edges in an $\cH$-free graph on $n$ vertices?
The aforementioned number is called the {\it extremal number} for $\cH$ and denoted by $\ex(n,\cH)$.
Denote by $\EX(n,\cH)$ the set of $\cH$-free graphs on $n$ vertices with $\ex(n,\cH)$ edges and call a graph in $\EX(n,\cH)$ an {\it extremal graph} for $\cH$.
We use $\ex(n,H)$ and $\EX(n,H)$ instead of $\ex(n,\cH)$ and $\EX(n,\cH)$ respectively when $\cH=\set{H}$.
Much interests has been attracted to this problem during the last few decades.
In 1907, Mantal \cite{mantel1907} determined the extremal number for triangle for all $n\ge3$.
Tur\'an \cite{turan1941} extended Mantel's result to complete graph with any given order in 1941.

Our notations are standard, see \cite{Bondy}.
Given a graph $H$ and a set of vertices $A\subseteq V(H)$, we denote $\min\{\deg_H(x)\text{ ; }x\in A\}$ by $\delta_H(A)$.
Given a graph $H$ and a positive integer $p\ge2$, the {\it edge blow-up} of $H$, denoted by $H^{p+1}$,
is the graph obtained from $H$ by replacing each edge of $H$ by a clique of size $p+1$ where the new vertices of the cliques are all distinct.
In \cite{liu2013,ni2020} and \cite{yuan2019}, $\ex(n,H^{p+1})$ has been investgated for a large family of graphs $H$.
In \cite{wang2021}, Wang, Hou, Liu and Ma determined the extremal number when $H$ is tree satisfies some conditions and $p\ge3$.
Furthermore, the authors of \cite{wang2021} posed the following question.

\begin{question}\label{question1}
    Give $p\ge3$ and a tree $T$ such that its two coloring classes $A$ and $B$ satisfying $|A|\le|B|$,
	determine $\ex(n,T^{p+1})$ when $\delta_T(A)=1$ and $\alpha(T)>|B|$.
\end{question}
We solve this question.
First we introduce some notations.
Given two disjoint graphs $G$ and $H$, the {\it disjoint union} of $G$ and $H$, denoted by $G\cup H$,
is the graph with vertex set $V(G)\cup V(H)$ and edge set $E(G)\cup E(H)$.
We use $kG$ to denote the disjoint union of $k$ copies of $G$.
The {\it join} of $G$ and $H$, denoted by $G+H$, is the graph obtained from $G\cup H$ by adding all edges of the form $gh$, where $g\in V(G)$ and $h\in V(H)$.
Denoted by $P_n$ a path on $n$ vertices, $S_n$ a star on $n$ vertices, $C_n$ a cycle on $n$ vertices, $M_n$ a matching on $n$ vertices and
$K_{n_1,\cdots,n_p}$ the complete $p$-partite graph with the size of $i$-partite class $n_i$.
A $p$-partite Tur\'an graph on $n$ vertices, denoted by $T(n,p)$, is a $K_{n_1,\cdots,n_p}$ with $\sum_{i=1}^p n_i=n$ and $|n_i-n_j|\le1$ for $1\le i,j\le p$.
Let $H'(n,p,q)=\overline{K}_{q-1}+T(n-q+1,p)$ and $h'(n,p,q)=e(H'(n,p,q))$.
Let $e(T(n,p))=t(n,p)$.
\begin{definition}[Simonovits \cite{simonovits1974}]
    Given a family of graphs $\cL$ with $p(\cL)=p \ge 3$, let $\cM:=\cM(L)$ be a family of minimal graphs $M$ up to subgraph senses such that
    there exist a large constant $t=t(\cL)$ depending on $\cL$ such that there exists a graph $L\in\cL$ such that
	$L$ is a subgraph of $M\cup I_v + T$, where $T=T(t,p-2)$ and $I_v$ is an independent set on $v$ vertices.
    We call $\cM(\cL)$ the {\it decomposition family} of $\cL$.
\end{definition}
A {\it covering} of a graph is a set of vertices $U$ such that every edge of this graph meets at least one vertices of $U$.
An {\it independent covering} of a bipartite graph is a covering $U$ such that no two vertices of $U$ are adjacent.
The {\it covering number} $\beta(T)$ of a graph $T$ is the minimum order of a covering of $T$.
The {\it independent covering number} $q(T)$ of a bipartite graph $T$ is the minimum order of an independent covering of $T$.
The {\it independent number} $\alpha(T)$ of a graph $T$ is the maximum order of a set of vertices such that no two of which are adjacent.
For a family of graphs $\cF$ which contains at least one bipartite graph, the {\it independent covering number} of $\cF$ is defined by
$$q(\cF)=\min\{q(F)\text{ ; $F\in\cF$ and $F$ is bipartite.}\}$$

\begin{theorem}[Liu \cite{liu2013}]
	Let $p\ge3$ be an integer and $T$ be a tree.
	Let coloring classes of $T$ be $A$ and $B$, where $|A|\le|B|$.
	When $n$ is sufficiently large, we have that
\begin{itemize}
	\item if $\delta_T(A)=1$ and $\alpha(T)=|B|$,
	then $\ex(n,T^{p+1})=h(n,p,|A|)$;

	\item if $\delta_T(A)\ge2$,
	then $\ex(n,T^{p+1})=h(n,p,|A|)+1$.
\end{itemize}
    Furthermore, extremal graphs are characterized.
\end{theorem}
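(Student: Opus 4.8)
The plan is to prove matching lower and upper bounds. The lower bound comes from explicit constructions certified by a colouring criterion; the upper bound from the Erd\H{o}s--Stone--Simonovits theorem, Simonovits' stability method, and an analysis of the decomposition family $\cM(T^{p+1})$. The colouring criterion is the engine. Since $T$ has an edge, every blown-up clique is a $K_{p+1}$ and $\chi(T^{p+1})=p+1$. For $m$ large one checks that $T^{p+1}\subseteq\overline{K}_s+T(m,p)$ if and only if $T^{p+1}$ admits a proper $(p+1)$-colouring some class of which has size at most $s$: in such a host every colour class of an embedded copy lies in the apex set or in a single Tur\'an class, while conversely any such colouring embeds because the host is complete between its parts. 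Writing $\mu(T^{p+1})$ for the least size of the smallest class over all proper $(p+1)$-colourings, a copy exists exactly when $s\ge\mu(T^{p+1})$. As each $K_{p+1}$ is rainbow, every colour class meets each clique once, hence is an independent transversal of the cliques; optimising over which tree vertices to place there (and spending a fresh clique vertex on each otherwise uncovered edge) gives $\mu(T^{p+1})=(|A|+|B|-1)-\max_{S}\sum_{v\in S}(\deg_T v-1)$, the maximum over independent $S\subseteq V(T)$.

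For the lower bound I show $\mu(T^{p+1})=\beta(T)=|A|$ in both cases. Case~1 assumes $\alpha(T)=|B|$; in Case~2 the hypothesis $\delta_T(A)\ge2$ yields, for every nonempty $X\subseteq A$, the forest estimate $2|X|\le e(X,N(X))\le|X|+|N(X)|-1$, hence $|N(X)|\ge|X|+1$, so Hall's condition saturates $A$ and again $\alpha(T)=|B|$. By Gallai's identity $\alpha(T)+\beta(T)=|A|+|B|$ we get $\beta(T)=|A|$; moreover $S$ together with one endpoint of each edge of $T-S$ is a cover, so $|S|+e(T-S)\ge\beta(T)=|A|$, which rearranges to $\sum_{v\in S}(\deg_T v-1)\le|B|-1$, with equality at $S=A$. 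Thus $\mu(T^{p+1})=|A|$, the class of size $|A|-1$ in $H'(n,p,|A|)$ is too small, so $H'(n,p,|A|)$ is $T^{p+1}$-free and $\ex(n,T^{p+1})\ge h'(n,p,|A|)$. For Case~2 I add one edge inside a Tur\'an class; a copy using it corresponds to an optimal colour class $S$ (so $\sum_{v\in S}(\deg_T v-1)=|B|-1$) avoiding both endpoints of some edge, i.e.\ to an optimal $S$ that is not a vertex cover. The expansion $|N(X)|\ge|X|+1$ forces $B$ to be the unique maximum independent set, hence $A$ the unique such $S$; as $A$ is a cover, no such copy arises and $\ex\ge h'(n,p,|A|)+1$. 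In Case~1 a leaf $a^*\in A$ makes $A\setminus\{a^*\}$ a second optimal class missing the edge at $a^*$, so the added edge would complete a $T^{p+1}$ and the constant stays $0$.

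For the upper bound, Erd\H{o}s--Stone--Simonovits gives $\ex(n,T^{p+1})=t(n,p)+o(n^2)$, and Simonovits' stability theorem shows any near-extremal $T^{p+1}$-free graph differs from $T(n,p)$ in only $o(n^2)$ edges, hence is almost $p$-partite. I would then determine $\cM(T^{p+1})$ and use it to bound the edges lying inside the $p$ near-classes: the identity $\mu(T^{p+1})=\beta(T)=|A|$ (valid here since $\alpha(T)=|B|$) pins the admissible apex defect at $|A|-1$ vertices, while the relevant covering parameter of $\cM(T^{p+1})$, through $q(\cM(T^{p+1}))$, reflects whether $A$ carries a leaf and hence whether one within-class edge may survive. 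A Simonovits-type symmetrisation---repeatedly moving a vertex into the class where it has the most neighbours---would then force the structure of $H'(n,p,|A|)$ exactly, show that a second within-class edge always completes a $T^{p+1}$, and thereby match the lower bound and characterise the extremal graphs as $H'(n,p,|A|)$ (plus one class edge in Case~2).

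The hard part is this final passage from the $o(n^2)$ stability description to the exact count: one must prove the within-class defect is precisely $|A|-1$ apex vertices---neither more nor fewer---and that in Case~2 exactly one extra within-class edge is tolerated. Both hinge on a precise determination of $\cM(T^{p+1})$ and on embedding arguments that convert any surplus edge into a copy of $T^{p+1}$; the transparent identity $\mu(T^{p+1})=\beta(T)$ is what makes the target constant explicit and steers the exchange argument.
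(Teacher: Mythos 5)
There is a genuine error, and it is visible before any of the technical work starts: your claimed extremal values do not match the theorem. Your lower-bound construction is $H'(n,p,|A|)=\overline{K}_{|A|-1}+T(n-|A|+1,p)$, whose apex set of $|A|-1$ vertices is \emph{independent}, giving $\ex(n,T^{p+1})\ge h'(n,p,|A|)$ (respectively $h'(n,p,|A|)+1$ in Case~2). But the quantity $h(n,p,|A|)$ in Liu's theorem is the edge count of $H(n,p,|A|)=K_{|A|-1}+T(n-|A|+1,p)$, in which the apex set is a \emph{clique}; this is exactly how the present paper uses the notation (its Corollary computes $\ex(n,T^{p+1})=h'(n,p,q)+\ex(q-1,\{K_q\})=h'(n,p,q)+\binom{q-1}{2}=h(n,p,q)$ and names $H(n,p,k+1)$ as the extremal graph). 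So your construction falls short of the truth by $\binom{|A|-1}{2}$ edges whenever $|A|\ge 3$, and the matching upper bound you plan to prove (symmetrisation ``forcing the structure of $H'(n,p,|A|)$ exactly'') is false: it would contradict the valid lower bound coming from the clique-apex graph.

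The root cause is that your ``colouring criterion engine'' only characterises containment in complete $(p+1)$-partite hosts: $\overline{K}_s+T(m,p)$ is the complete multipartite graph $K_{s,m/p,\dots,m/p}$, where indeed every embedded $K_{p+1}$ is rainbow and every colour class is an independent transversal, which is where your parameter $\mu(T^{p+1})$ lives. The host $K_s+T(m,p)$ is not $(p+1)$-partite, and the correct criterion there is different: $T^{p+1}\subseteq K_s+T(m,p)$ (for large $m$) iff some set $U$ with $|U|\le s$ meets every blown-up clique, and the minimum size of such a $U$ is $\min_{W\subseteq V(T)}\{|W|+e(T-W)\}=\beta(T)$, which equals $|A|$ under your (correctly derived) identity $\alpha(T)=|B|$, by K\H{o}nig/Gallai. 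Hence $K_{|A|-1}+T(n-|A|+1,p)$ is also $T^{p+1}$-free and strictly denser than your construction; every exact result in this area (Liu's theorem, the Wang--Hou--Liu--Ma theorem, and Theorem~\ref{THM:main result 1} of this paper, where the apex carries an extremal $\cB$-free graph) has this clique-like structure on the apex set that your criterion cannot see. Separately, even within your own framework the upper bound is only an outline: you defer precisely the exact-count step (``the hard part'') that any such proof must carry out, so the proposal is incomplete as well as aimed at the wrong target. Note also that the paper itself does not prove this statement---it is quoted from \cite{liu2013}---so the comparison here is necessarily against the known form of that result, with which your values disagree.
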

    Wang, Hou, Liu and Ma \cite{wang2021}  extended Liu's result to a larger family of trees very recently.
    Before stating their results, we need follow definitions. \\
    Define
\begin{equation*}
	g_1(k)=
	\begin{cases}
		k^2-\frac{3}{2}k & \text{$k$ is even;} \\
		k^2-\frac{3k-1}{2} & \text{$k$ is odd,}
	\end{cases}
	\text{ and }
	g_2(k)=
	\begin{cases}
		k^2-\frac{3}{2}k & \text{$k$ is even;} \\
		k^2-k & \text{$k$ is odd.}
	\end{cases}
\end{equation*}
\begin{theorem}[Wang, Hou, Liu and Ma \cite{wang2021}]
	Let $p\ge3$ be an integer and $T$ be a tree.
	Let coloring classes of $T$ be $A$ and $B$, where $|A|\le|B|$.
	Let $A_0=\{x\in A\text{ ; }\deg_T(x)=\delta_T(A)\}$ and $B_0=\{y\in B\text{ ; }|N(y)\cap A_0|\ge2\}$.
	Denote by $q=|A|$, $k=\delta_T(A)$ and $b+2=\delta(B_0)$.
	If $k\ge2$, then for sufficiently large $n$, we have $\ex(n,T^{p+1})=$
	\begin{equation*}
		\begin{cases}
			h(n,p,q)+g_1(k) & \text{$k$ is even;} \\
			h(n,p,q)+g_2(k) & \text{$k$ is odd and $B_0=\emptyset$;} \\
			h(n,p,q)+g_1(k) & \text{$k$ is odd and $0\le b\le q-1-\ceil{\frac{k-1}{q-1}}$}; \\
			h'(n,p,q)+g_2(k)+ \lfloor(q-1)(b-1)/2\rfloor & \text{$k$ is odd and $b\ge\max\left\{1,q-1-\ceil{\frac{k-1}{q-1}}\right\}$}. \\
			
		\end{cases}
	\end{equation*}
    Furthermore, all extremal graphs are characterized.
\end{theorem}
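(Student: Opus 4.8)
The plan is to run the standard two-phase paradigm for exact Turán results: first an asymptotic-plus-stability step, then an exact structural step that reduces everything to a finite optimization. Since every edge of $T$ is replaced by a clique of order $p+1$, the blow-up $T^{p+1}$ contains $K_{p+1}$ and satisfies $\chi(T^{p+1})=p+1$, so $p(T^{p+1})=p$ and the Erdős–Stone–Simonovits theorem gives $\ex(n,T^{p+1})=t(n,p)+o(n^2)$. By Simonovits' stability theorem, any $T^{p+1}$-free graph $G$ on $n$ vertices with $t(n,p)-o(n^2)$ edges admits a vertex partition $V(G)=V_1\cup\cdots\cup V_p$ into parts of size $(1+o(1))\frac{n}{p}$ in which all but $o(n^2)$ edges run between distinct parts.

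First I would clean this partition up by a standard absorption/deletion argument: after removing an exceptional set of $o(n)$ vertices and then bounding the within-part degree of every surviving vertex by a constant, the edges inside the parts are reduced to a graph $G_0$ of \emph{bounded} size, while the bipartite graphs between parts become complete. The mechanism forcing $G_0$ to be small is exactly the decomposition family $\cM=\cM(T^{p+1})$: by definition, if the anomalous (within-part) edges together with an independent set $I_v$ contained a member $M\in\cM$, then joining with the clean Turán pattern $T(t,p-2)$ furnished by the remaining parts would produce a copy of $T^{p+1}$. Hence $T^{p+1}$-freeness forces $G_0$ to avoid every member of $\cM$, and for a tree these members are small bipartite graphs whose extremal behaviour is dictated by the degree data $k=\delta_T(A)$, the set $A_0$, and the set $B_0$; this is precisely where the invariants $k$ and $b$ enter.

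The heart of the argument is then the finite extremal problem: maximize $e(G_0)$ over bounded graphs $G_0$ placed inside one part (and, in the relevant regime, interacting with the apex set $\overline{K}_{q-1}$) subject to keeping the whole graph $T^{p+1}$-free. I would attack this by a symmetrization/local-switching argument that makes the apex and the Turán base as symmetric as possible, reducing to counting edges in a graph on $O(k)$ vertices that contains neither the blow-up-forcing gadget attached to an $A_0$-vertex of degree $k$ nor a configuration matching the $B_0$ structure. The count splits by the parity of $k$: for even $k$ a balanced extremal gadget yields $g_1(k)$, while for odd $k$ an unavoidable odd component shifts the count, producing $g_2(k)$ in the $B_0=\emptyset$ regime; and once $b\ge\max\{1,q-1-\ceil{(k-1)/(q-1)}\}$ the optimal base switches from $H(n,p,q)$ to $H'(n,p,q)$, absorbing the extra apex–gadget interaction into the term $\floor{(q-1)(b-1)/2}$.

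Finally, for the lower bound I would exhibit matching constructions: the base graph $H(n,p,q)$ (respectively $H'(n,p,q)$) together with an optimal bounded gadget $G_0$ carrying $g_1(k)$, $g_2(k)$, or $g_2(k)+\floor{(q-1)(b-1)/2}$ additional edges, and verify directly that no copy of $T^{p+1}$ embeds by tracking how many cliques $K_{p+1}$ must simultaneously meet at the image of an $A_0$-vertex. I expect the main obstacle to be the exact finite optimization in the odd-$k$ case: pinning down the precise threshold $b\ge q-1-\ceil{(k-1)/(q-1)}$ at which the extremal construction changes from the $h$-type to the $h'$-type base, and proving simultaneously that no denser $T^{p+1}$-free gadget exists and that every extremal graph has exactly this shape. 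This demands a tight interplay between the independent-covering data of $\cM$ and the degree constraints encoded by $A_0$ and $B_0$, and it is the step at which the $o(n^2)$ stability error must be eliminated completely.
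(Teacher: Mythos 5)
This statement is quoted by the paper from Wang, Hou, Liu and Ma \cite{wang2021}; the paper itself contains no proof of it, so your attempt can only be measured against the method of that cited work (which is the same stability-plus-decomposition-family paradigm the present paper uses for its own Theorem~\ref{THM:main result 1}). Your overall framework is indeed the correct and standard one: Erd\H{o}s--Stone gives $t(n,p)+o(n^2)$, the decomposition family $\cM(T^{p+1})$ (via Lemma~\ref{lem:decomposition family after splitting}, vertex splitting) controls the edges inside the near-Tur\'an parts and inside the apex set, and the answer is ultimately a finite optimization over the bounded ``gadget''. So the skeleton matches the literature.

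The genuine gap is that the entire quantitative core of the theorem is never proved --- it is only named. You nowhere derive $g_1(k)$ or $g_2(k)$ from an actual extremal analysis of bounded $\cM$-free graphs; you do not identify which members of $\cM$ (which splittings of $T$, governed by $A_0$, $B_0$, $k$ and $b$) constrain the gadget, which is exactly where the parity of $k$ and the parameter $b$ enter; you do not establish the threshold $b\ge q-1-\ceil{\frac{k-1}{q-1}}$ at which the extremal base changes from the clique-apex graph to $H'(n,p,q)$, nor the term $\floor{(q-1)(b-1)/2}$; and the lower-bound constructions are described only as ``an optimal bounded gadget'' whose $T^{p+1}$-freeness you promise to ``verify directly'' without doing so. Your own text concedes this (``I expect the main obstacle to be the exact finite optimization in the odd-$k$ case''), but that obstacle \emph{is} the theorem: every displayed formula in the statement lives in that step, and a proof that defers it proves nothing beyond $\ex(n,T^{p+1})=t(n,p)+O(1)$-type structure. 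A secondary, fixable inaccuracy: after stability and deletion of $o(n)$ exceptional vertices you cannot make the bipartite graphs between parts literally complete; one only gets almost-complete bipartite graphs (as in the present paper, where vertices of $C_i'$ have $(1-c-tc^2)n_3$ neighbours in each $B_j'$), and the exact argument must live with that, typically by a counting comparison against the conjectured extremal graph rather than by assuming completeness.
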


Now we set $\cM=\cM(T^{p+1})$ and $q=q(\cM)$.
If there exists a graph $T'\in\cM$ such that $\beta(T')\le q-1$, then
we let $\cB:=\cB(T)$ be the family of graph $T'[A_{T'}]$, where $T'\in\cM$ and $A_{T'}$ is a covering set with size at most $q-1$ of $T'$.
If $\beta(T')\ge q$ for every $T'\in\cM$, then we set $\cB=\{K_q\}$.

\begin{theorem}\label{THM:main result 1}
	Let $p\ge3$ be an integer and $T$ be a tree.
	Let coloring classes of $T$ be $A$ and $B$, where $|A|\le|B|$.
	If $\delta_T(A)=1$ and $\alpha(T)>|B|$,
	then for sufficiently large $n$, we have
	\begin{equation}
		\ex(n,T^{p+1})=h'(n,p,q)+\ex(q-1,\cB)
	\end{equation}
	where $q$ and $\cB$ are defined as above.
	Furthermore, all extremal graphs are characterized.
\end{theorem}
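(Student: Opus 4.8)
The plan is to prove matching lower and upper bounds. Since each edge of $T$ is replaced by a $K_{p+1}$ and $T$ is bipartite, one checks at once that $\chi(T^{p+1})=p+1$, so by the Erd\H{o}s--Stone--Simonovits theorem the leading term of $\ex(n,T^{p+1})$ is $t(n,p)$ and every near-extremal graph is close to $T(n,p)$. All the content is therefore in the lower-order term, and this is exactly what the decomposition family $\cM$ governs through the parameters $q=q(\cM)$ and the family $\cB$.

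For the lower bound I would take the graph $G^{\ast}$ obtained from $H'(n,p,q)=\ol{K}_{q-1}+T(n-q+1,p)$ by placing on its $q-1$ apex vertices $W$ a fixed $\cB$-free graph $G_0$ with $e(G_0)=\ex(q-1,\cB)$; this has precisely $h'(n,p,q)+\ex(q-1,\cB)$ edges, so it remains only to verify that $G^{\ast}$ is $T^{p+1}$-free. Given a hypothetical copy of $T^{p+1}$, I would delete $p-2$ of the $p$ balanced Turán parts, which play the role of $T(t,p-2)$ in the definition of $\cM$, and note that the remaining host is the complete tripartite pattern on $W\cup V_1\cup V_2$ with $G_0$ sitting inside $W$. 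Any surviving member $M\in\cM$ must then be embedded with a covering set $A_M$ of size at most $q-1$ realised inside $W$, forcing the induced graph $M[A_M]\in\cB$ to appear in $G_0$ -- a contradiction. The same computation, run with an apex of size $q$, shows the construction is sharp: since any graph on $q$ vertices contains $\ol{K}_q$ as a spanning subgraph, $q$ dominating vertices already yield $\ol{K}_q+T(m,p)$, and the bipartite $M\in\cM$ with independent covering number exactly $q=q(\cM)$ embeds into it, producing $T^{p+1}$.

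For the upper bound I would start from a $T^{p+1}$-free graph $G$ on $n$ vertices with $\ex(n,T^{p+1})$ edges. Stability yields a partition $V(G)=U_1\cup\dots\cup U_p$ with $o(n^2)$ edges inside the parts, after which a cleaning and symmetrisation step removes atypical vertices and shows that all but boundedly many vertices are adjacent to almost the whole graph. I would then isolate the set $D$ of genuinely dominating vertices and argue $|D|\le q-1$: if $|D|\ge q$, then $D$ together with the large $p$-partite remainder contains $\ol{K}_q+T(m,p)$, hence $T^{p+1}$, contradicting $q=q(\cM)$. With $|D|\le q-1$ fixed, the edge count splits into those inside $D$, those from $D$ to $V(G)\setminus D$, and those in $G-D$; the last two are bounded by $|D|(n-|D|)+t(n-|D|,p)$ up to lower-order terms, while $G[D]$ must be $\cB$-free (otherwise a member of $\cB$ inside $D$ completes a decomposition graph $M\in\cM$ into $T^{p+1}$), giving $e(G[D])\le\ex(|D|,\cB)$. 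Because each additional apex vertex contributes roughly $n$ join edges, a short monotonicity comparison of $h'(n,p,j+1)+\ex(j,\cB)$ over $0\le j\le q-1$ forces $|D|=q-1$ and delivers the stated value.

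The hard part is upgrading the approximate picture from stability to the exact structure and carrying out the full characterisation of extremal graphs. I expect the delicate step to be ruling out any \emph{partially} dominating vertex -- that is, proving the apex set $D$ is clean, of size exactly $q-1$, with all of $V(G)\setminus D$ lying in a genuine $p$-partite Turán graph -- since a single mis-assigned vertex already perturbs the edge count at the relevant order. This is precisely where the hypotheses $\delta_T(A)=1$ and $\alpha(T)>|B|$ enter: they pin down the minimal graphs of $\cM$, hence the value of $q$ and the family $\cB$, and they are what separates this regime from Liu's $\alpha(T)=|B|$ case, where the $\ex(q-1,\cB)$ correction degenerates. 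Making the two embedding arguments quantitatively compatible -- so that ``$q$ dominating vertices force $T^{p+1}$'' and ``$q-1$ do not'' hold with the single extremal constant $\ex(q-1,\cB)$ -- will be the crux.
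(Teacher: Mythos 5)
Your lower bound is the paper's own: plant a graph from $\EX(q-1,\cB)$ on the $q-1$ apex vertices of $H'(n,p,q)$, with freeness following from the definitions of $\cM$ and $\cB$; that part is fine. The genuine gap is in the upper bound, and it is precisely the step you flag at the end as ``the crux'' and then leave unproven. Your stability outline only gives $e(G)\le |D|(n-|D|)+t(n-|D|,p)+\ex(|D|,\cB)$ \emph{up to lower-order terms}, and no monotonicity comparison over $|D|$ can convert an asymptotic estimate into the exact value $h'(n,p,q)+\ex(q-1,\cB)$: the whole theorem lives at additive-constant precision, and bounding $e(G-D)$ by $t(n-|D|,p)$ exactly is equivalent to knowing the parts are independent sets, which is what must be proved. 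The paper's mechanism for killing the error terms is progressive induction: setting $\phi(n)=\ex(n,T^{p+1})-h'(n,p,q)-\ex(q-1,\cB)$, it deletes a Turán block $T(n_4p,p)$ from both the extremal graph $L_n$ and the construction, compares the numbers $e_L$ and $e_H$ of edges meeting the deleted block, and shows that any defect (a nonempty exceptional class $D$, fewer than $q-1$ apex vertices, or a missing join edge) forces $e_L<e_H$, hence $\phi(n)<\phi(n-n_4p)$; iterating drives $\phi$ to $0$. Nothing in your proposal plays this role.

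The second, more substantive omission is the embedding argument used when no defect occurs: one must show that a \emph{single} edge inside a part $B_i'\cup C_i'$ already creates a copy of $T^{p+1}$. This is where the hypothesis $\delta_T(A)=1$ actually enters --- not, as you assert, to ``pin down the minimal graphs of $\cM$''. In the case $q=q(T)=|A|$, the paper takes a leaf $u\in A$ with neighbour $v$, embeds $(T-u)^{p+1}$ with $A\setminus\{u\}$ placed on the $q-1$ apex vertices $E$ (possible exactly because $s=q-1$), and then uses a hypothetical edge $u'u''$ inside a part, together with the image of $v$ and one fresh vertex from each remaining part, to complete a $K_{p+1}$ and hence extend to $T^{p+1}$. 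In the remaining case $q<q(T)$ a separate lemma is needed --- proved in the paper by analysing which vertices of $T$ are split --- showing that some $F\in\cM$ with $q(F)=q$ has a degree-one vertex in its covering class $A_F$, after which the same extension runs. Neither this dichotomy nor the extension argument appears in your proposal, so the exact upper bound, and with it the characterization of extremal graphs that the theorem also asserts, is not established.
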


\noindent {\bf Remark.}	Combining with the results in \cite{liu2013} and \cite{ni2020}, the extremal number for $T^{p+1}$ is determined, where $T$ is an arbitrary tree and $p\ge3$.

\medskip

A double broom $B(\ell,s,t)$ is a tree obtained from a path $P_{\ell}$
by attaching $s$ pendant edges to one end vertex of $P_{\ell}$
and $t$ pendant edges to the other end vertex of $P_{\ell}$, where $\ell,s,t\ge2$.
The double broom $B(7,5,3)$ is as in Figure~\ref{fig:double broom}.
\begin{figure}[H]
	\begin{center}
		\begin{tikzpicture}
			\filldraw[fill=black] (0,0) circle(1.5pt);
			\filldraw[fill=black] (1,0) circle(1.5pt);
			\filldraw[fill=black] (2,0) circle(1.5pt);
			\filldraw[fill=black] (3,0) circle(1.5pt);
			\filldraw[fill=black] (-1,0) circle(1.5pt);
			\filldraw[fill=black] (-2,0) circle(1.5pt);
			\filldraw[fill=black] (-3,0) circle(1.5pt);
			\filldraw[fill=black] (4,0) circle(1.5pt);
			\filldraw[fill=black] (4,1) circle(1.5pt);
			\filldraw[fill=black] (4,-1) circle(1.5pt);
			\filldraw[fill=black] (4,0.5) circle(1.5pt);
			\filldraw[fill=black] (4,-0.5) circle(1.5pt);
			\filldraw[fill=black] (-4,0) circle(1.5pt);
			\filldraw[fill=black] (-4,1) circle(1.5pt);
			\filldraw[fill=black] (-4,-1) circle(1.5pt);
			\draw  (-4,0) -- (4,0);
			\draw  (-4,1) -- (-3,0);
			\draw  (-4,-1) -- (-3,0);
			\draw  (4,1) -- (3,0);
			\draw  (4,-1) -- (3,0);
			\draw  (4,0.5) -- (3,0);
			\draw  (4,-0.5) -- (3,0);
		\end{tikzpicture}
		\caption{Double broom $B(7,5,3)$}
		\label{fig:double broom}
	\end{center}
\end{figure}
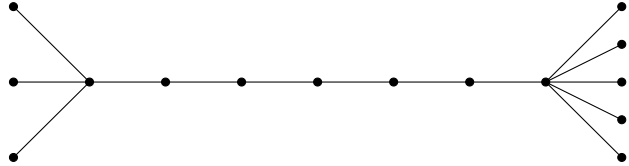

\begin{corollary}
	Let $p\ge3$ be an integer and $T=B(2k,s,t)$ be a double broom satisfying $k,s,t\ge2$.
	Then for suffciently large $n$, we have
	$$
	\ex(n,T^{p+1})=h(n,p,k+1)
	$$
	Furthermore, $H(n,p,k+1)$ is the unique extremal graph.
\end{corollary}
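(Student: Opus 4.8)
## Proof proposal

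The plan is to apply \Cref{THM:main result 1} to $T=B(2k,s,t)$, so that the entire task reduces to checking its two hypotheses and then evaluating the invariants $q=q(\cM)$ and $\cB$ for this particular tree. Label the spine $v_1,\dots,v_{2k}$, with leaves $u_1,\dots,u_s$ at $v_1$ and $w_1,\dots,w_t$ at $v_{2k}$, and assume $s\le t$. The colour classes are then $A=\{v_2,v_4,\dots,v_{2k}\}\cup\{u_1,\dots,u_s\}$ and $B=\{v_1,v_3,\dots,v_{2k-1}\}\cup\{w_1,\dots,w_t\}$, of sizes $k+s\le k+t$, so $A$ is the smaller class. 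The leaves $u_i\in A$ have degree $1$, whence $\delta_T(A)=1$. A routine computation gives $\beta(T)=k+1$: the set $\{v_1,v_{2k}\}\cup\{v_3,v_5,\dots,v_{2k-1}\}$ is a cover of this size, and the matching $\{v_1u_1,v_2v_3,v_4v_5,\dots,v_{2k-2}v_{2k-1},v_{2k}w_1\}$ shows it is optimal. Hence $\alpha(T)=|V(T)|-\beta(T)=k+s+t-1$, and $\alpha(T)>|B|=k+t$ precisely when $s>1$, which holds. Both hypotheses of \Cref{THM:main result 1} are therefore satisfied.

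It remains to prove the two numerical facts $q(\cM)=k+1$ and $\beta(T')\ge k+1$ for every $T'\in\cM$; the second forces $\cB=\{K_{k+1}\}$, and then \Cref{THM:main result 1} yields $\ex(n,T^{p+1})=h'(n,p,k+1)+\ex(k,K_{k+1})=h'(n,p,k+1)+\binom{k}{2}=h(n,p,k+1)$, with the unique extremal graph $K_k+T(n-k,p)=H(n,p,k+1)$ (since $\EX(k,K_{k+1})=\{K_k\}$). The computation of $\cM=\cM(T^{p+1})$ is the heart of the argument and the step I expect to be the main obstacle. I will use the description of $\cM$ as the family of minimal induced subgraphs $T^{p+1}[S]$ for which $T^{p+1}-S$ is $(p-1)$-colourable; because $\chi(T^{p+1})=p+1$ and $T$ is a tree, this is equivalent to the condition that $S$ meets each of the $e(T)$ cliques $C_e\cong K_{p+1}$ in at least two vertices (sufficiency comes from colouring the surviving spine by its bipartition and extending clique by clique). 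In particular the bipartite members are exactly the choices of one edge from each $C_e$, and any such $S$ is automatically minimal, since deleting a vertex leaves some clique with a single vertex of $S$.

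The key tool is a transfer inequality $\beta(T')\ge\beta(T)$ valid for every $T'=T^{p+1}[S]\in\cM$. Given any cover $U$ of $T'$, each clique $C_e$ contains $\ge 2$ vertices of $S$, hence an edge of $T'$, so $U\cap C_e\neq\emptyset$; replacing every new (non-spine) vertex of $U$ by an endpoint of its clique's tree-edge produces a cover $W$ of $T$ with $|W|\le|U|$, giving $\beta(T)\le\beta(T')$. Applied to independent covers this also gives $q(F)\ge\beta(T)=k+1$ for every bipartite $F\in\cM$. For the matching upper bound I would exhibit one bipartite member attaining $k+1$: take the independent set $I=\{v_1,v_{2k}\}\cup\{v_4,v_6,\dots,v_{2k-2}\}$, of size $k$, which covers every edge of $T$ except $v_2v_3$. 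Choose in each clique $C_e$ with $e$ incident to $I$ the pair $\{c,z_e\}$, where $c\in I\cap e$ and $z_e$ is a new vertex, and in the single clique $C_{v_2v_3}$ the pair of two new vertices. Independence of $I$ guarantees that no spine edge joins two centres, so the resulting member is a disjoint union of $k$ stars (centred at $I$) together with one isolated edge—$k+1$ components, each with smaller side $1$—and therefore $q(F)=k+1$. Combining the two bounds gives $q(\cM)=k+1$ and $\cB=\{K_{k+1}\}$, which completes the evaluation and the uniqueness statement above.
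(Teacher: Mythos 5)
Your proposal is correct and takes essentially the same approach as the paper's proof: verify the hypotheses $\delta_T(A)=1$ and $\alpha(T)>|B|$, compute $q(\cM(T^{p+1}))=k+1$ and $\beta(T')\ge k+1$ for all $T'\in\cM(T^{p+1})$ (forcing $\cB=\{K_{k+1}\}$), and then invoke Theorem~\ref{THM:main result 1}. The only difference is that you supply full proofs---the covering-transfer inequality $\beta(T')\ge\beta(T)$ and the explicit star-forest member of $\cM$ attaining $q=k+1$---for the facts the paper dismisses as ``easily checked.''
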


\begin{proof}
	It can be easily checked that $q(\cM(T^{p+1}))=k+1$ and $\beta(T')\ge k+1$ holds for every $T'\in\cM(T^{p+1})$.
	Furthermore, we have $\alpha(T)=k-1+s+t>k+\min\{s,t\}=|B|$ and $\delta_T(A)=1$.
	Therefore, the result holds by applying Theorem~\ref{THM:main result 1} with $q=k+1$ and $\cB=\{K_{q}\}$.
\end{proof}

\section{Preliminaries}
\subsection{Technical lemmas}

Given a graph $T$, a {\it vertex split} on some vertex $v\in V(T)$ is defined by replacing $v$ by an independent set of size $\deg_T(v)$
in which each vertex is adjacent to exactly one distinct vertex in $N_T(v)$.
The family of graphs that can be obtained by applying vertex split on some $U\subseteq V(T)$ is denoted by $\cH(T)$.
The following lemma can help us to determine the graphs in $\cM(T^{p+1})$.
\begin{lemma}[Liu \cite{liu2013}]\label{lem:decomposition family after splitting}
	Given $p\ge3$ and any graph with $\chi(H)\le p-1$, we have $\cM(H^{p+1})=\cH(H)$.
	In particular, a matching of size $e(H)$ is in $\cM(H^{p+1})$.
\end{lemma}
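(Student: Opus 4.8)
The plan is to reduce the statement to a combinatorial fact about near-$p$-colourings of $H^{p+1}$ and then to read off the vertex splits directly. First I would record that $\chi(H^{p+1})=p+1$: the graph contains $K_{p+1}$ (from any edge of $H$), while a proper colouring of $V(H)$ with $\chi(H)\le p-1$ colours extends to all of $H^{p+1}$ by giving the $p-1$ private vertices $w^e_1,\dots,w^e_{p-1}$ of each clique $K_e$ the $p-1$ colours missing on its two endpoints. Hence $p(H^{p+1})=p$, so the host graph in the definition of $\cM$ supplies exactly $p-1$ pairwise disjoint independent classes that are complete to one further ``special'' class which may carry the edges of $M$. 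Consequently $M\in\cM(H^{p+1})$ iff $M$ is subgraph-minimal such that $V(H^{p+1})$ admits a partition into $p$ classes, $p-1$ of them independent and the last inducing a subgraph of $M$. The key local observation is that for each edge $e=uv$ of $H$ the clique $K_e$, on $p+1$ vertices, must meet the special class in at least $2$ vertices, since each of the $p-1$ independent classes can absorb at most one vertex of a clique.

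For the inclusion $\cH(H)\subseteq\cM(H^{p+1})$ I would start from a set $U\subseteq V(H)$ and build the partition directly. Place every original vertex of $V(H)\setminus U$ in the special class $D$, and every vertex of $U$ in the independent classes according to a fixed proper $(p-1)$-colouring $c$ of $H$, which exists because $\chi(H)\le p-1$. For each edge $e=uv$ I then select the two clique-vertices put into $D$: both endpoints when $u,v\notin U$; the endpoint outside $U$ together with one private $w^e_i$ when exactly one endpoint lies in $U$; and two private vertices $w^e_1,w^e_2$ when $u,v\in U$. The leftover vertices of $K_e$ are sent, one per class, into the independent classes not already used by $u,v$ under $c$; because the $w^e_i$ are private to $e$ this never violates independence, and $c$ being proper keeps the originals independent within each class. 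Reading off $H^{p+1}[D]$ one sees it is precisely the graph obtained by splitting exactly the vertices of $U$: a kept--kept edge stays an edge, a kept--split edge becomes a pendant edge, and a split--split edge becomes an isolated edge. Thus $H^{p+1}[D]=H_U$, so every member of $\cH(H)$ is realised.

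For the reverse inclusion and minimality I would argue by counting. In any valid partition each clique $K_e$ contributes at least one edge to the special class, and since two distinct edges of $H$ share at most one vertex their cliques share at most one vertex, so these contributed edges are distinct; hence the special class induces at least $e(H)$ edges. A subgraph-minimal $M$ therefore meets every $K_e$ in exactly two vertices, inducing exactly $e(H)$ edges, and the same case analysis as above shows this induced graph is a vertex split $H_U$ with $U=\{v\in V(H)\text{ ; }v\notin D\}$. Finally, since every graph in $\cH(H)$ has exactly $e(H)$ edges and no isolated vertices, none can be a proper subgraph of another, so all of them are simultaneously minimal; this yields $\cM(H^{p+1})=\cH(H)$, with the matching $M_{e(H)}$ arising from $U=V(H)$. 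The main obstacle is the bookkeeping in the second paragraph: one must check that for every choice of $U$ the $p-1$ independent classes can be kept simultaneously independent while distributing the private clique-vertices, and it is exactly here that the hypothesis $\chi(H)\le p-1$ is indispensable; the minimality and incomparability argument is then a short degree count.
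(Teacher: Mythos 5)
The paper never proves this lemma: it is imported verbatim from Liu \cite{liu2013}, so there is no internal argument to compare yours against. Judged on its own, your proof is correct, and it follows what is essentially the standard route for this fact. Your reformulation of membership in $\cM(H^{p+1})$ — a partition of $V(H^{p+1})$ into $p-1$ independent classes plus one special class whose edges must lie in $M$ — is the right reading of the decomposition family for $H^{p+1}$; note that you have silently (and correctly) taken the host to contain $p-1$ independent Tur\'an classes, i.e.\ interpreted the quantity $p(\cL)$ in the paper's definition as the chromatic number $p+1$ of $H^{p+1}$. This is worth flagging: under a literal reading with only $p-2$ independent classes, each clique $K_e$ would be forced to put a triangle, not an edge, into the special class, and the lemma as stated would be false. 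Given that reading, both of your inclusions are sound: the explicit partition built from a proper $(p-1)$-colouring of $H$ (the only place $\chi(H)\le p-1$ enters) realises every vertex split, and the counting step — each $K_e$ places at least two mutually adjacent vertices in the special class, while distinct cliques share at most one vertex — drives the converse.

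Two statements are looser than they should be, though both are repairable with ingredients you already have. First, your case analysis shows that the special class of an arbitrary valid partition induces a graph \emph{containing} some split of $H$ as a subgraph; the claim that for a subgraph-minimal $M$ this induced graph \emph{is} a split (and that each $K_e$ meets the special class in exactly two vertices) is not direct but follows from minimality, because the split itself lies in the family by your second paragraph, so a minimal $M$ cannot properly contain it. Second, the concluding sentence ``none can be a proper subgraph of another, so all of them are simultaneously minimal'' only establishes incomparability within $\cH(H)$, whereas minimality must hold against the whole family of admissible graphs $M$: you need to add that any proper subgraph of a split that still admitted a valid partition would, by your counting step, itself contain some split and hence have at least $e(H)$ edges, which is impossible since each split has exactly $e(H)$ edges and no isolated vertices. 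With those two lines inserted, the proof is complete.
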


\begin{theorem}[Erd\'{o}s and Stone \cite{erdos1946}]\label{THM:turan graph}
	For all integers $p\ge1$, $N\ge1$, and every $\varepsilon>0$, there exists an integer $n_0(\varepsilon,N,p+1)$ such that every graph with $n\ge n_0$ vertices and at least $t(n,p)+\varepsilon n^2$ edges contains $T(N,p+1)$ as a subgraph.
\end{theorem}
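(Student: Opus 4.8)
The final statement is the classical Erdős–Stone theorem, and the plan is to prove it by induction on $p$, building the $(p+1)$-st colour class on top of a very large $p$-partite Turán graph supplied by the induction hypothesis. Since $T(N,p+1)$ is a subgraph of the balanced complete $(p+1)$-partite graph $K_{p+1}(s)$ (every class of size $s:=\ceil{N/(p+1)}$), it suffices to produce one copy of $K_{p+1}(s)$; accordingly I would prove the slightly cleaner statement that for every $p\ge1$, $s\ge1$ and $\varepsilon>0$ there is an $n_0$ such that a graph on $n\ge n_0$ vertices with at least $t(n,p)+\varepsilon n^2$ edges contains $K_{p+1}(s)$.

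For the base case $p=1$ we have $t(n,1)=0$, so the hypothesis is merely $e(G)\ge\varepsilon n^2$. Counting the pairs $(v,S)$ with $S$ an $s$-element subset of the neighbourhood $N(v)$ gives $\sum_v\binom{\deg(v)}{s}$, which by convexity and $\sum_v\deg(v)\ge2\varepsilon n^2$ is of order $n^{s+1}$, while there are only $\binom{n}{s}=O(n^s)$ candidate sets $S$. Pigeonholing over the sets $S$ then yields one $s$-set contained in at least $s$ neighbourhoods, and those $s$ vertices together with $S$ span $K_{s,s}=K_2(s)$; this is the usual Kővári–Sós–Turán counting argument, comfortable here because $\varepsilon n^2$ dwarfs $\ex(n,K_{s,s})=O(n^{2-1/s})$.

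For the inductive step, assume the claim for $p-1$. Given $G$ with at least $t(n,p)+\varepsilon n^2$ edges, I would first pass to a subgraph $G'$ on $m$ vertices with large minimum degree $\delta(G')\ge(1-\tfrac1p+\varepsilon')m$ for some $\varepsilon'>0$ depending on $\varepsilon$, by repeatedly deleting vertices whose degree drops below this threshold; since each deletion destroys few edges and $G$ starts with a positive surplus over $t(n,p)$, the process cannot exhaust the graph and leaves $m\to\infty$ as $n\to\infty$. Because $t(n,p)\ge t(n,p-1)$, the induction hypothesis applied with a huge target class size $S$ (fixed last) furnishes a copy of $K_p(S)$ in $G'$, with classes $V_1,\dots,V_p$. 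Now the minimum-degree bound lets me count: for each $i$ the number of vertices with fewer than $s$ neighbours in $V_i$ is at most the number of non-edges meeting $V_i$ divided by $S-s+1$, namely at most $S(\tfrac1p-\varepsilon')m/(S-s+1)$, which tends to $(\tfrac1p-\varepsilon')m$ as $S\to\infty$. Summing over the $p$ classes shows that at least $(\tfrac{p\varepsilon'}{2})m$ vertices are \emph{good}, i.e.\ have at least $s$ neighbours in every $V_i$.

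Finally, to assemble the new class I would double count over the $\binom{S}{s}^{p}$ choices of an $s$-subset $A_i\subseteq V_i$ for each $i$: every good vertex is completely joined to at least one such tuple $(A_1,\dots,A_p)$, so once the number of good vertices exceeds $(s-1)\binom{S}{s}^{p}$ the pigeonhole principle delivers $s$ good vertices joined to a common tuple, and these $s$ vertices together with $A_1\cup\cdots\cup A_p$ form the desired $K_{p+1}(s)\supseteq T(N,p+1)$. The main obstacle is the order of quantifiers and the tracking of constants: with $\varepsilon$ and $s$ given, the induction fixes the multipartite structure, then $S$ must be chosen large enough that the count of good vertices beats the constant $(s-1)\binom{S}{s}^{p}$, and only afterwards is $n_0$ chosen large enough to guarantee both the min-degree cleaning and the existence of $K_p(S)$. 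The one delicate point is verifying that the min-degree reduction genuinely preserves a surplus over the $p$-partite Turán density, so that the induction hypothesis remains applicable to $G'$.
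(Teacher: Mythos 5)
The paper contains no proof of this statement: it is imported verbatim as a known result of Erd\H{o}s and Stone \cite{erdos1946} and used purely as a black box (with $\varepsilon=1/(2p(p-1))$) to extract a large complete multipartite subgraph $T(n_1p,p)$ from the extremal graph $L_n$, so there is no internal proof to compare yours against; the only question is whether your argument is a correct proof of the classical theorem, and it is. Your reduction of $T(N,p+1)$ to $K_{p+1}(s)$ with $s=\lceil N/(p+1)\rceil$ is sound, the base case is the standard K\H{o}v\'ari--S\'os--Tur\'an double count (and $\varepsilon n^2$ indeed dwarfs $O(n^{2-1/s})$), and the inductive step --- minimum-degree cleaning to $\delta(G')\ge(1-\frac1p+\varepsilon')m$, extraction of $K_p(S)$ for huge $S$ via the induction hypothesis, the good-vertex count, and the pigeonhole over the $\binom{S}{s}^p$ tuples --- is the standard modern proof found in the textbooks of Bollob\'as and Diestel. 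Two remarks. First, the ``delicate point'' you flag at the end is in fact a non-issue: you never need $G'$ to retain a surplus over the $p$-partite Tur\'an density, only over the $(p-1)$-partite one, and the minimum-degree bound alone gives $e(G')\ge(1-\frac1p+\varepsilon')\frac{m^2}{2}\ge t(m,p-1)+\delta m^2$ for a fixed $\delta>0$ since $1-\frac1p>1-\frac1{p-1}$; all the deletion argument must supply is $m\ge c(\varepsilon)n$, which the usual comparison of the surviving edge count against $\binom{m}{2}$ yields once $\varepsilon'$ is taken small relative to $\varepsilon$. Second, one step deserves an explicit word: the $s$ good vertices produced by the pigeonhole are automatically disjoint from $A_1\cup\cdots\cup A_p$, because no vertex lies in its own neighbourhood, so the assembled $K_{p+1}(s)$ is genuine even when good vertices sit inside $\bigcup_i V_i$. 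In short: your proof is correct and self-contained where the paper simply cites; what the citation buys the paper is brevity, and what your argument buys is an explicit, quantifier-careful derivation (the cleaner minimum-degree descendant of the original 1946 argument) of exactly the form of the theorem --- containment of $T(N,p+1)$ --- that the paper invokes.
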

	
\section{Proof of Theorem~\ref{THM:main result 1}}

Given a tree $T$ with coloring classes $A$ and $B$ satisfying $|A|\le|B|$, $\delta_T(A)=1$ and $\alpha(T)>|B|$.
Now we set $\cM$ be the decomposition family of $T^{p+1}$.
It follows from Lemma~\ref{lem:decomposition family after splitting} that $\cM=\cH(T)$.
Furthermore, $\cM$ contains a matching of size $t$, where $t=e(T)$.

Let $\mathfrak{U_n}$ be the family of graphs obtained from $H'(n,p,q)$ by embedding a copy of $Q\in\EX(q-1,\cB)$ in $\ol{K}_{q-1}$ in $H'(n,p,q)$.
The definition of $\cB$ implies every $H_n\in\mathfrak{U_n}$ is $T^{p+1}$-free, and hence we have
\begin{equation}\label{eq:ex lower bound}
	\ex(n,T^{p+1})\ge h'(n,p,q)+\ex(q-1,\cB)
\end{equation}
Let $\phi(n):=\ex(n,T^{p+1})-h'(n,p,q)-\ex(q-1,\cB)$ and $K=\max\{\phi(n):n\le n_0\}$, where $n_0$ is a large constant depending on $p$ and $T$.
Clearly,  $\phi(n)$ is a non-negative integer.
For the upper bound, we will show that if $n>n_0$ and $\phi(n)>0$, then there exists an $n_4$ depending on $p$ and $T$ such that $\phi(n)<\phi(n-n_4p)$.
This would imply  that if $n= n_0+mn_4p$, then $\phi(n)<K-m$, and hence the theorem holds for $n\ge n_0+Kn_4p$.

Let $n_1$ be a sufficiently large constant.
Let $n_0=n_0(\varepsilon,n_1p,p)$ be the constant from Theorem~\ref{THM:turan graph}, where $\varepsilon=1/(2p(p-1))$.
Let $L_n$ be a $T^{p+1}$-free graph with $\ex(n,T^{p+1})$ edges, where $n\ge n_0$.
Equation~\ref{eq:ex lower bound} and Theorem~\ref{THM:turan graph} imply that
$L_n$ contains a $T=T(n_1p,p)$ with partite class $\wt{B}_1^0,\cdots,\wt{B}_p^0$ as a subgraph.
Note that $M_{2t}\in\cM$.
It follows from the definition of decomposition family and the fact that $L_n$ is $T^{p+1}$-free that $\nu(L_n[\wt{B}_i^0])\le t$ for $i\in\brac{p}$.
Let the maximum matching in $L_n[\wt{B}_i^0]$ be $\{x_1y_1,\cdots,x_{t_i},y_{t_i}\}$ with $t_i\le t$.
Let $B_i^0=\wt{B}_i^0 \setminus\{x_1y_1,\cdots,x_{t_i},y_{t_i}\}$.
By the definition of $B_i^0$, there is no edge in $L_n[B_i^0]$.
Hence there is an induced subgraph $T_0=T(n_2p,p)$ of $L_n$ with partite class $B_1^0,\cdots,B_p^0$ obtained by deleting $2t$ vertices from each $\wt{B}_i$, where $n_2=n_1-2t$.

Let $c<1/(1+t)$ be a sufficiently small constant.
If there exists a vertex $x_1\in L_n-T_0$ such that $x_1$ is adjacent to at least $c^2n_2$ vertices of each partite class of $T_0$,
then $T_0$ contains a $T_1=T(c^2n_2p,p)$ such that each vertex of which is joint to $x_1$.
Generally, if there exists a vertex $x_i\in L_n-T_{i-1}-\{x_1,\cdots,x_{i-1}\}$ such that $u$ is adjacent to at least $c^{2i}n_2$ vertices of each partite class of $T_{i-1}$,
then $T_{i-1}$ contains $T_i=T(c^{2i}n_2p,p)$ such that each vertex of which is joint to $x_1,\cdots,x_i$.
Thus we can define a sequence of graphs recursively.
However, it follows from the definition of $L_n$ and $q$ that the above process stops at last after $T_{q-1}$.
Suppose to the contrary, let $V(T_q)=B_1^q\cup\cdots\cup B_p^q$.
Note that the graph induced by $B_1^q\cup\{x_1,\cdots,x_q\}$ contains some element of $\cM$ by the definition of $q$.
Then $L_n$ contains a copy of $T^{p+1}$ by the definition of decomposition family, a contradiction.

Now suppose that the above process ends with $T_s$ with $s\le q-1$.
Let $E=\{x_1,\cdots,x_s\}$ and the partite class of $T_s$ be $B_1^s,\cdots,B_p^s$.
Denote $|B_1^s|$ by $n_3$ for convenience.
We can partition the remaining vertices into following set: Let $x\in V(L_n)\setminus(T_s\cup E)$.
If there exists an $i\in\brac{p}$ such that $x$ is adjacent to less than $c^2n_3$ vertices of $B_i^s$ and is adjacent to at least $(1-c)n_3$ vertices of $B_j^s$ for all $j\ne i$, then let $x\in C_i$.
If there exists an $i\in\brac{p}$ such that $x$ is adjacent to less than $c^2n_3$ vertices of $B_i^s$ and is adjacent to less than $(1-c)n_3$ vertices of $B_j^s$ for some $j\ne i$, then let $x\in D$.
It follows from the definition of $T_s$ that $C_1\cup\cdots\cup C_p\cup D$ is a partition of $V(L_n)\setminus(T_s\cup E)$.
Note that for a $S\subset B_i^s\cup C_i$ with $|S|\le2t$, the common neighbourhoods of $S$ in $B_j^s$ is at least $(1-2tc)n_3\ge n_3/2$, where $j\ne i$.
It follows from the definition of decomposition family and Lemma~\ref{lem:decomposition family after splitting} that $\nu(L_n[B_i^s\cup C_i])\le t$.
Now consider the edges joining $B_i^s$ and $C_i$ and select a maximum matching, say $y_1z_1,\cdots,y_{t_i}z_{t_i}$ with $y_{i'}\in B_i^s$, $z_{i'}\in C_i$ and $1\le i'\le t_i\le t$.
Let $X_i=\cup_{i'=1}^{t_i} (N_{L_n}(z_{i'})\cap B_i^s)$. Then $|X_i|\le tc^2n_3$ by the definition of $C_i$.
Let $C_i'=C_i\cup X_i$ and $B_i'=B_i^s\setminus X_i$, then $L_n[B_i'\cup C_i']$ contains no edge by the maximality of $y_1z_1,\cdots,y_{t_i}z_{t_i}$.
Hence it is possible to move $tc^2n_3$ vertices from $B_i^s$ to $C_i$ to obtain $B_i'$ and $C_i'$ such that $B_i'\subset B_i^s$ and $C_i\subset C_i'$.
Let $n_4=(1-tc^2)n_3=|B_i'|$, $T_s'=T(n_4p,p)$ and $\wh{L}=L_n-T_s'$.
Then $T_s'$ is an induced subgraph of $L_n$ and the vertices of $\wh{L}$ can be partitioned into $p+2$ sets $C_1',\cdots,C_p',D$ and $E$ such that
\begin{itemize}
\item every $x\in E$ is adjacent to each vertex of $T_s'$ and $|E|=s$,
\item every $x\in C_i'$ is adjacent to no vertex of $B_i'$ and
is adjacent to at least $(1-c-tc^2)n_3$ vertices of $B_j'$ for all $j\ne i$.
\item every $x\in D$ is adjacent to at most $c^2n_3$ vertices of $B_i'$ and
is adjacent to at most $(1-c)n_3$ vertices of $B_j'$ for some $i,j\in \brac{p}$ with $i\ne j$.
\end{itemize}
\noindent Let the number of edges joining $T_s'$ and $\wh{L}$ in graph $L_n$ denoted by $e_L$.
Then we have
\begin{equation*}
	e(L_n)=e(\wh{L})+e_L+e(T_s')
\end{equation*}
Let $H_n\in \mathfrak{U}_n$ and $T_s''$ be an induced copy of $T(n_4p,p)$ in $H_n$.
Let $H_{n-n_4p}=H_n-T_s''$ and $e_H$ be the number of edges joining $T_s''$ and $H_{n-n_4p}$ in graph $H_n$.
Then
\begin{equation*}
	e(H_n)=e(H_{n-n_4p})+e_H+e(T_s'')
\end{equation*}
Since $\wh{L}$ contains no copy of $T^{p+1}$, we have $e(\wh{L})\le e(L_{n-n_4p})$, where $L_{n-n_4p}\in\EX(n-n_4p,T^{p+1})$.
Obviously we have $e(T_s')=e(T_s'')$.
Simple calculation show that
\begin{equation}\label{equ:e_H}
	\begin{aligned}
		e_H
		&=(q-1)n_4p+(n-n_4p-q+1)n_4(p-1)\\
		&=(q-1)n_4+(n-n_4p)n_4(p-1)\\
	\end{aligned}
\end{equation}
It follows from the definition of $C_i'$, $D$ and $E$ that
\begin{equation}\label{equ:e_L}
	\begin{aligned}
		e_L
		&\le sn_4p+(n-n_4p-s-|D|)n_4(p-1)+|D|((p-2)n_4+(1-c+c^2)n_3) \\
		&=sn_4+(n-n_4p)n_4(p-1)-|D|(n_4-(1-c+c^2)n_3) \\
		&\le(q-1)n_4+(n-n_4p)n_4(p-1)-|D|n_3(c-(t+1)c^2) \\
		&=e_H-|D|n_3(c-(t+1)c^2)
	\end{aligned}
\end{equation}
Hence we have
\begin{equation*}
	\begin{aligned}
		\phi(n)
		&=e(L_n)-e(H_n) \\
		&\le e(L_{n-n_4p})-e(H_{n-n_4p})+e_L-e_H \\
		&=\phi(n-n_4p)+e_L-e_H
	\end{aligned}
\end{equation*}
If $e_L-e_H<0$, then we have $\phi(n)<\phi(n-n_4p)$, where $n_4\le n_2$.
Hence we suppose that $e_L-e_H\ge 0$.
Combined with Equation~\ref{equ:e_H} and~\ref{equ:e_L}, we conclude that $e_L=e_H$. (Note that $c<1/(1+t)$ is sufficiently small.)
Note that $e_L=e_H$ holds if and only if $|D|=0$, $s=q-1$ and $C_i'$ is complete to $B_j'$ for $i\in\brac{p}$ and $j\ne i$.

If $e(L_n[E])$ contains some copy of $B'\in\cB$, then $L_n$ contains a copy of $T^{p+1}$ by the definition of $\cB$.
Hence we conclude that $L_n[E]$ is $\cB$-free and $e(L_n[E])\le\ex(q-1,\cB)$.
The rest of the proof will be divided into two cases.

\medskip

\noindent{\bf Case 1. $ q=q(T)$.}

\medskip

In this case, note that $T$ is a tree.
Clearly, $T$ admits a unique proper $2$-coloring and hence $q(T)=|A|$ holds.
Note that $\delta_T(A)=\min\{\deg_T(x)\text{ ; }x\in A\}=1$ by assumptions.
Hence there exists a vertex $u\in A$ such that $N_T(u)=\{v\}$.
Since $|A|=q(T)=q$, we can find a copy of $(T-\{u\})^{p+1}$ using vertices in $E\cup B_1'\cup\cdots\cup B_p'$ in $L_n$
Let $\phi$ be an embedding from $(T-\{u\})^{p+1}$ to $L_n$ such that
$\image \psi \subseteq E\cup B_1'\cup\cdots\cup B_p'$ and $\psi(A\setminus\{u\})=E$.

Now we will show that $B_i'\cup C_i'$ is an independent set of $L_n$ for each $i$.
It suffices to show $C_i'$ is an independent set for each $i$ since there is no edge incident with $B_i'$.
We assume that $\psi(v)\in B_\ell$, where $\ell\ne i$.
\footnote{This is possible since $E$ is joint to every vertex in $B_1'\cup\cdots\cup B_p'$.}
In fact, if there is an edge $u'u''$ in $L_n[C_i']$,
then we can choose $u_{j}$ in $B_j'$ such that $u_j\notin \image \psi$ for $j\in\brac{p}\setminus\{i,\ell\}$.
It can be seen immediately from the definition of $B_i'$, $C_i'$ and $E$ that
$u'$, $u''$ and $\psi(v)$ togerther with all $u_j$ forms a copy of $K_{p+1}$.
Furthermore, it can be verified that the mapping constructed above is an embedding from $T^{p+1}$ to $L_n$.
This completes the proof for this case.

\medskip

\noindent{\bf Case 2. $ q<\sigma(T)$.}

\medskip

Let $F\in\cM$ such that $q(F)=q$. Let $A_F$ and $B_F$ be coloring classes of $F$ such that $q(F)=|A_F|$.
Now we show that $\min\{\deg_F(x)\text{ ; }x\in A_F\}=1$.
It follows from the definition of decomposition family that $\min\{\deg_F(x)\text{ ; }x\in A_F\}\ge1$.

If $A_F$ contains a vertex $u$ which is obtained by splitting a vertex in $T$, then the result follows since $\deg_F(u)=1$.
Now we assume that every $u\in A_F$ is not a vertex obtained by splitting a vertex in $T$.
Then we have $u\in V(T)$ for every $u\in A_F$.
By lemma~\ref{lem:decomposition family after splitting}, we may assume $F$ is obtained by splitting $X\subseteq V(T)$.
It is easy to see that $X\cap A_F=\emptyset$.
Otherwise, we can find a vertex obtained by splitting a vertex in $T$, a contradiction.
Let the vertices obtained by splitting $X$ in $T$ be $Y$ and $Z=B_F\setminus X$.
It is clear that $V(T)$ is the disjoint union of $A_F$, $X$ and $Z$.
Furthermore, $V(F)$ is the disjoint union of $A_F$, $Y$ and $Z$.
Note that we have $E_T(A_F,Z)=E_F(A_F,Z)$ by the definition of $Z$.
It follows from the definition that $Y\cup Z$ is an independent set of $F$.
Note that $\delta(F)\ge1$ since $F$ is obtained by splitting $X\subseteq V(T)$.
Hence every $y\in Y$ is adjacent to some $v\in A_F$ in graph $F$.
Therefore, $A_F$ is a independent covering of $T$.
Then we have $q(T)\le|A_F|=q<q(T)$, a contradiction.

Hence we have $\min\{\deg_F(x)\text{ ; }x\in A_F\}=1$.
Then similar arguments in Case 1 show that $B_i'\cup C_i'$ is an independent set of $L_n$.

Note that $B_i'\cup C_i'$ is an independent set of $L_n$ for each $i$ in both cases, then we have
\begin{equation}
	\begin{aligned}
		e(L_n)
		&\le L_n[E]+e_{L_n}(E,V(L_n)\setminus E)+\sum_{1\le i<j\le p}e_{L_n}(B_i'\cup C_i',B_j'\cup C_j' ) \\
		&\le \ex(q-1,\cB)+(q-1)(n-q+1)+\sum_{1\le i<j\le p}|B_i'\cup C_i'||B_j'\cup C_j'| \\
		&\le \ex(q-1,\cB)+(q-1)(n-q+1)+t(n-q+1,p) \\
		&= \ex(q-1,\cB)+h'(n,p,q)
	\end{aligned}
\end{equation}
which contradicts the fact that $\phi(n)>0$. The theorem follows.


\begin{thebibliography}{1}
	%\bibitem{balachandran2009} N.~Balachandran and N.~Khare, Graphs with restricted valency and matching number, {\em Discrete Mathematics.} {\bf 309} (2009), 4176-4180.
	\bibitem{Bondy} J.~A.~Bondy and U.~S.~R.~Murty, Graph theory, Graduate Texts in Mathematics, 244. Springer, New York, 2008. xii+651 pp.
	\bibitem{chen2003} G.~Chen, R.~J.~Gould, F.~Pfender and B.~Wei, Extremal graphs for intersecting cliques, {\em Journal of Combinatorial Theory. Series B.} {\bf 89} (2003) 159–171.
   	%\bibitem{chvatal1976} V.~Chv\'atal, D.~Hanson, Degrees and matchings, {\em Journal of Combinatorial Theory. Series B.} {\bf 20} (1976), 128-138.
	\bibitem{erdos1946} P.~Erd\H{o}s and A.~H.~Stone, On the structure of linear graphs, {\em Bull. Amer. Math. Soc.} {\bf 52} (1946), 1087-1091.
    \bibitem{erdos1966} P.~Erd\H{o}s and M.~Simonovits, A limit theorem in graph theory, {\em Studia Sci. Math. Hungar.} {\bf 1} (1966), 51-57.
    \bibitem{erdos1995} P.~Erd\H{o}s, Z.~F$\ddot{u}$redi, R.~J.~Gould and D.~S.~Gunderson, Extremal graphs for intersecting triangles, {\em Journal of Combinatorial Theory. Series B.} {\bf 64} (1995), 89-100.
	%\bibitem{gallai1963} T.~Gallai, Neuer Beweis eines Tutte’schen Satzes, {\em Magyar Tud. Akad. Mat. Kutat\'o Int. K$\ddot{o}$zl.} {\bf 8} (1963), 135-139.
   % \bibitem{keevash2004} P.~Keevash and D.~Mubayi, Stability theorems for cancellative hypergraphs, {\em Journal of Combinatorial Theory. Series B.} {\bf 92} (2004), 163-175.
	\bibitem{liu2013} H.~Liu, Extremal graphs for blow-ups of cycles and trees, {\em Electronic Journal of Combinatorics.} {\bf 20(1)} (2013), 65.
	\bibitem{mantel1907} W.~Mantel, Problem 28, {\em Wiskundige Opgaven.} {\bf 10} (1907), 60-61.
    \bibitem{ni2020} Z.~Ni, L.Kang, E.~Shan and H.~Zhu, Extremal graphs for blow-ups of kxeyrings, {\em Graphs and Combinatorics.} {\bf 36} (2020), 1827-1853.
	\bibitem{simonovits1968} M.~Simonovits, A method for solving extremal problems in graph theory, stability problems, {\em Theory of Graphs (Proc. Colloq., Tihany)} (1968), 279-319.
    \bibitem{simonovits1974} M.~Simonovits, Extremal graph problems with symmetrical extremal graphs: addtional chromatic conditions, {\em Discrete Mathematics.} {\bf 7} (1974), 349-376.
	\bibitem{turan1941} P.~Tur\'an, On an extremal problem in graph theory, {\em {Mat. Fiz. Lapok}.} {\bf 48} (1941), 436-452.
	\bibitem{wang2021} A.~Wang, X.~Hou, B.~Liu and Y.~Ma, The Tur\'an number for the edge blow-up of trees, {\em Discrete Mathematics.} {\bf 344} (2021), 112627.
	\bibitem{yuan2019} L.~Yuan, Extremal graphs for edge blow-up of graphs, {\em Journal of Combinatorial Theory. Series B.} {\bf 152} (2022),  379-398.
	\bibitem{yuan2021} L.~Yuan, Extremal graphs for odd wheels, {\em Journal of Graph Theory.} {\bf 98} (2021) 691-707.
	
\end{thebibliography}
\end{document}